\author{Jelena~Jovanovi\'c\\ \emph{University of Belgrade, Faculty of Mathematics }}
\title{On strong Mal'cev conditions for congruence meet--semidistributivity \\ in a locally finite variety }
\date{}
\theoremstyle{plain}
\newtheorem{teorema}{Theorem}[section]
\newtheorem{definicija}[teorema]{Definition}
\newtheorem{tvrdjenje}[teorema]{Proposition}
\newtheorem{fakt}[teorema]{Fact}
\begin{document}
\maketitle
\begin{abstract}\footnote{2010 Mathematics Subject Classification: Primary 08B05;\\Keywords and phrases: Variety, Hobby-McKenzie types, Omitting types, Polymorphisms of digraphs}
In this paper we examine four--element and five--element digraphs for existence of certain polymorphisms that imply congruence meet--semidistributivity in a locally finite variety. The results presented here occurred  as an integral  part of my  research for optimal strong  Mal'cev conditions  describing  the property mentioned. Most of the programming needed to obtain these results was done in C programming language but for some results we also used the model builder Paradox. The source codes and Paradox inputs will be  presented here as well (appendices one and two of this paper contain the source codes; an example of Paradox input is given in section 6). 
\end{abstract}
\section{Introduction}

The various conditions that are equivalent to  congruence meet--semidistributivity in locally finite varieties of algebras have been examined in several papers and books so far -- see \cite{Czedli}, \cite{hm}, \cite{ksz}, \cite{lipp},\cite{rossw}, \cite{hhhm}.  Congruence meet-semidistributive varieties proved to be a very general yet very well behaved class of varieties. This condition is, for example, equivalent to congruence neutrality (\cite{ksz}, \cite{lipp}), or   to having no covers of types {\bf 1} or {\bf 2} in congruence lattices of finite algebras in the variety (holds for locally finite varieties, \cite{hm}). It also implies the truth of Park's Conjecture, \cite{park} (as proved by Ross  Willard in  \cite{rossw}), and also Quackenbush's Conjecture , \cite{quack} (which holds trivially in congruence distributive case due to J\'onsson's Lemma,   \cite{bjarni}, and is proved for the congruence meet-semidistributive case by Kearnes and Willard in  \cite{kearnesross}). Recently, the research in the Constraint Satisfaction Problem has shown that congruence meet-semidistributivity of the variety generated by the algebra of compatible operations is equivalent to the condition that the particular algorithm called 'localconsistency checking' would faithfully solve the Constraint Satisfaction Problem. This  property is called 'bounded width' and there is a lot of literature on the concept. This result due to L. Barto and M. Kozik (\cite{bartokozik}) is certainly among the strongest known partial results for the Dichotomy Conjecture, and probably the hardest to be proven so far.

Characterization of various semantical properties of all algebras in a variety and/or their congruence lattices  by equivalent syntactical conditions was started by A. I. Mal'cev in \cite{malcev}. Because of that, the properties which can be characterized in this way are called Mal'cev properties, and the corresponding syntactical  conditions - Mal'cev conditions. Particularly, if a property can be described by  a fixed number of term operations of fixed arities satisfying  a fixed number of linear equations (like the original Mal'cev condition for congruence permutability), we call it a {\em strong} Mal'cev property. On the other hand, a usual Mal'cev property (also known as {\em weak}) is equivalent to satisfying one strong Mal'cev condition (for some $n \in \omega$) from a given countable sequence of strong Mal'cev  conditions (for every $n \in \omega$) in which each member implies the next one (meaning that the properties are increasingly more general), like in the case of J\'onsson's condition for congruence distributivity, \cite{bjarni}. Obviously, strong Mal'cev conditions are preferable, if available, as then we can use the operations in a computer search, but there are properties which are Mal'cev properties but are proved not to be strong Mal'cev properties. The condition most commonly used for congruence meet-semidistributivity of a variety (not necessarily locally finite), until recently, was the one proved by Willard, \cite{rossw}, but the research in the Constraint Satisfaction Problem has recently uncovered that the congruence meet-semidistributivity of a locally finite variety is a strong Mal'cev property. The best, i. e. syntactically strongest we know of, is due to M. Kozik (\cite{hhhm}). We tried to see if it is also the best possible, see \cite{jj}, and we identified a single candidate system which implies congruence meet-semidistributivity and is syntactically stronger and with fewer operations and/or of smaller arity than the condition proved in \cite{hhhm}. We proved that either this system we found is indeed equivalent to congruence meet-semidistributivity of a locally finite variety, or the condition from \cite{hhhm} is the best possible (proved in \cite{jj}). However, which of these alternatives is true we have  not managed to ascertain yet. In this paper we examine algebras of polymorphisms of four--element and five--element digraphs searching for a counterexample for our system.

\section{Background}

In this paper an {\em algebra} denotes a structure ${\bf A}=(A,F^{\bf A})$, where $F$ is a signature, or language, consisting only of operation symbols of various arities, $A$ is a nonempty set, and for each symbol $f\in F$ of arity $k$ the corresponding element $f^{\bf A}\in F^{\bf A}$ is a mapping $f^{\bf A}:A^k\rightarrow A$. The set of {\em term operations} of ${\bf A}$ is the set of all operations obtained from $F^{\bf A}$ and projection operations via finitely many compositions. All algebras of the same signature which identically satisfy a set of equations are called a variety. An algebra ${\bf A}$ is locally finite if for any finite subset $X$ of $A$, the set of all results of term operations applied to elements of $X$ (that is a subalgebra generated by $X$)  is also finite. A variety is locally finite if every algebra in it is.

There is a natural connection between operations and relations on the same set. It says that a ($k$-ary) relation and an ($n$-ary) operation are compatible if for any $n$ vectors from the relation, the vector obtained by pointwise application of the operation is again in the relation. The classic results of universal algebra often connect the properties of the compatible equivalence relations, which form a lattice under inclusion called the congruence lattice, and other properties of algebras. In the paper (\cite{jj}) we mention the meet-semidistributivity of the congruence lattices of all algebras in a variety, which is the lattice implication $x\wedge z = y\wedge z \Rightarrow (x\vee y)\wedge z = x\wedge z$. An equivalent condition of the congruence meet-semidistributivity of a locally finite variety is omitting types of covers ${\bf 1}$ and ${\bf 2}$ in finite algebras of the variety (\cite{hm}). For any other definitions and basic results which are not found in this introductory part, the reader is referred to \cite{hhhhm} for basic universal algebra and \cite{hm} for tame congruence theory.

\begin{definicija}
Let $\mathbf{A}$ be a finite algebra and $\alpha$ a minimal congruence of $\mathbf{A}$ (i.e. $0_{\mathbf{A}} <  \alpha$ and if $\beta$ is a congruence of $\mathbf{A}$ with $0_{\mathbf{A}} <  \beta \le \alpha $ then $\beta = \alpha$.)
\begin{itemize}
\item An $\alpha$--minimal set of $\mathbf{A}$ is a subset $U$ of $\mathbf{A}$ that satisfies following two conditions:
\begin{itemize}
\item[-]$U = p(\mathbf{A})$ for some unary polynomial $p(x)$ of $\mathbf{A}$ that is not constant on at least one $\alpha$--class
\item[-] with respect to containment, $U$ is minimal having this property.
\end{itemize}
\item An $\alpha$--neighbourhood (or $\alpha$--trace) of $\mathbf{A}$ is a subset $N$ of $\mathbf{A}$ such that:
\begin{itemize}
\item[-] $N = U \cap (a /_{\alpha})$ for some $\alpha$--minimal set $U$ and $\alpha$--class $a /_{\alpha}$
\item[-] $\vert N \vert > 1$.
\end{itemize}
\end{itemize}
\end{definicija}
We can easily see that a given $\alpha$--minimal set $U$ must contain at least one, and possibly more, $\alpha$--neighbourhoods.The union of all $\alpha$--neighbourhoods in $U$ is called the body of $U$, and the remaining elements of $U$ form the tail of $U$. What is important here is that algebra $\mathbf{A}$ induces uniform structures on all its $\alpha$--neighbourhoods, meaning they (the structures induced) all belong to the same of five possible types. Let us now define an induced structure.
\begin{definicija}
Let $\mathbf{A}$ be an algebra and $U \subseteq \mathbf{A}$. The algebra induced by $\mathbf{A}$ on $U$ is the algebra with universe $U$ whose basic operations consist of the restriction to $U$ of all polynomials of $\mathbf{A}$  under which $U$ is closed. We denote this induced algebra by $\mathbf{A} \vert_{U}$.
\end{definicija}
\begin{teorema}
Let $\mathbf{A}$ be a finite algebra and $\alpha$ a minimal congruence of $\mathbf{A}$.
\begin{itemize}
\item If $U$ and $V$ are $\alpha$--minimal sets then $\mathbf{A} \vert_{U}$ and $\mathbf{A} \vert_{V}$ are isomorphic and in fact there is a polynomial $p(x)$ that maps $U$ bijectively onto $V$.
\item If $N$ and $M$ are $\alpha$--neighbourhoods then $\mathbf{A} \vert_{N}$ and $\mathbf{A} \vert_{M}$ are isomorphic via the restriction of some polynomial of $\mathbf{A}$.
\item If $N$ is $\alpha$--neighbourhood then $\mathbf{A} \vert_{N}$ is polynomially equivalent to one of:
\begin{enumerate}
\item A unary algebra whose basic operations are all permutations (unary type);
\item A one--dimensional vector space over some finite field (affine type);
\item A $2$--element boolean algebra (boolean type);
\item A $2$--element lattice (lattice type);
\item A $2$--element semilattice (semilattice type);
\end{enumerate}
\end{itemize}
\end{teorema}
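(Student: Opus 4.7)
The plan is to establish the three bullet points in order, relying on the minimality of $\alpha$ and the finiteness of $\mathbf{A}$ for the first two parts, then reducing the last part to known classification results.

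First I would prove the isomorphism of $\mathbf{A}|_U$ and $\mathbf{A}|_V$ for any two $\alpha$-minimal sets. Write $U=p(A)$ and $V=q(A)$ where neither $p$ nor $q$ is constant on all $\alpha$-classes. A preliminary step replaces $p$ and $q$ by their idempotent powers (guaranteed to exist on a finite set), so that $p|_U=\mathrm{id}_U$ and $q|_V=\mathrm{id}_V$. Now consider the polynomial $qp|_U:U\to V$. Since $\alpha$ is minimal, the kernel of any polynomial is either trivial on $U$ (i.e.\ injective when restricted to $U$) or collapses all of $\alpha|_U$; the latter is impossible because then $p$ itself would be constant on every $\alpha$-class touching $U$, contradicting the choice of $p$. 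Hence $qp|_U$ is injective, and its image is an $\alpha$-minimal subset of $V$; by the minimality clause of the definition, this image must equal $V$. The symmetric argument gives a polynomial bijection $V\to U$, and because polynomials commute with the restrictions of polynomials, this bijection is an isomorphism between the induced algebras.

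For the second bullet, use the polynomial bijection $p:U\to V$ just produced. Since $p$ preserves $\alpha$, it sends $\alpha$-classes of $U$ onto $\alpha$-classes of $V$, so $p(N)$ is a trace of $V$. To move $p(N)$ onto an arbitrary trace $M$ within $V$, I would show that the polynomials of $\mathbf{A}$ act transitively on the traces sitting inside a fixed $\alpha$-minimal set: if $N_1, N_2 \subseteq V$ are traces, pick $x \in N_1$, $y \in N_2$ with $(x,y) \in \alpha$, and use the minimality of $\alpha$ (together with the fact that $\alpha$ is generated, as a congruence, by any non-trivial pair) to construct a polynomial sending $N_1$ onto $N_2$. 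Composing with $p$ yields the required isomorphism between $\mathbf{A}|_N$ and $\mathbf{A}|_M$.

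The third bullet is the heart of the matter and the main obstacle. The induced algebra $\mathbf{A}|_N$ has the property that $\alpha|_N$ is the total relation (since $N$ is contained in a single $\alpha$-block), and every non-constant unary polynomial of $\mathbf{A}|_N$ is a permutation (again by the non-collapsing argument from the first step). Thus $\mathbf{A}|_N$ is a \emph{minimal algebra} in the sense of P\'alfy. I would then invoke P\'alfy's theorem: a finite minimal algebra of cardinality at least three is polynomially equivalent either to a $G$-set (a unary algebra whose basic operations are permutations, giving type 1) or to a one-dimensional vector space over a finite field (type 2). The remaining case $|N|=2$ is handled separately by a direct inspection of clones on a two-element set via Post's lattice; one checks that the clones which are not of unary or affine type are precisely those polynomially equivalent to the Boolean algebra, the two-element lattice, or the two-element semilattice, producing types 3, 4 and 5. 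The hard part is P\'alfy's theorem itself, whose proof uses a non-trivial analysis of the action of the permutation group of unary polynomials on $N$ together with commutator-theoretic tools; I would cite it rather than reprove it.
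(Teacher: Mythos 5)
The paper does not actually prove this theorem: it is quoted from \cite{nm} and the proof is delegated wholesale to Hobby--McKenzie \cite{hm}, so your sketch is being measured against the standard tame-congruence-theory development rather than against anything in the text. Your outline does follow that development (idempotent polynomials with range a minimal set, the ``injective on $U$ or collapses $\alpha|_U$'' dichotomy, P\'alfy's theorem plus the two-element clone classification for $|N|=2$), and citing P\'alfy rather than reproving him is exactly what the source does.

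Two steps as written do not work, though. First, in the second bullet you propose to pick $x\in N_1$ and $y\in N_2$ with $(x,y)\in\alpha$; this is impossible whenever $N_1\neq N_2$, because distinct traces of the same minimal set lie in distinct $\alpha$-classes by definition ($N_i=V\cap(a_i/_{\alpha})$). The correct argument takes a non-trivial pair $(x,y)\in\alpha|_{N_1}$, uses minimality to write $\alpha=\mathrm{Cg}(x,y)$, and extracts from a Mal'cev chain witnessing $(u,v)\in\mathrm{Cg}(x,y)$ for a non-trivial pair $u,v\in N_2$ a unary polynomial that fails to collapse $\{x,y\}$ and lands in the $\alpha$-class of $N_2$; composing with the idempotent onto $V$ and invoking the injectivity dichotomy then finishes. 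Second, replacing $p$ by an idempotent power $e=p^k$ only yields $e(A)\subseteq p(A)=U$; to get equality you must already know that $p|_U$ permutes $U$, which is itself a consequence of the injectivity dichotomy, so the lemmas must be ordered more carefully than your sketch suggests (this bootstrapping is the actual content of Hobby--McKenzie's Theorem 2.8). Neither issue is fatal to the strategy, but both need repair before the sketch becomes a proof.
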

\begin{proof}
The theorem in this form is given in \cite{nm}, and the proof can be found in \cite{hm}.
\end{proof}
The previous theorem allows us to assign a type to each minimal congruence $\alpha$ of an algebra according to the behaviour of the $\alpha$--neighbourhoods (for example, a minimal congruence whose $\alpha$--neighbourhoods are polynomially equivalent to a vector space is said to have affine type or type $2$).

Taking this idea one step further, given a pair of congruences $(\alpha, \beta)$ of $\mathbf{A}$ with $\beta$ covering $\alpha$ (i.e. $\alpha < \beta$ and there are no congruences of $\mathbf{A}$ strictly between the two), one can form the quotient algebra $\mathbf{A}/_{\alpha}$, and then consider the congruence $\beta /_{\alpha} = \{(a/_{\alpha}, b/_{\alpha}):(a,b) \in \beta \}$. Since $\beta$ covers $\alpha$ in the congruence lattice of $\mathbf{A}$,  $\beta /_{\alpha}$ is a minimal congruence of $\mathbf{A}/_{\alpha}$, so it can be assigned one of the five types. In this way we can assign to each covering pair of congruences of $\mathbf{A}$ a type (unary, affine, boolean, lattice, semilattice, or 1, 2, 3, 4, 5 respectively). Therefore, going through all covering pairs of congruences of this algebra we obtain a set of types, so--called typeset of $\mathbf{A}$, denoted by $typ \{\mathbf{A}\}$. Also, for $\mathcal{K}$  a class of algebras, the typeset of $\mathcal{K}$ is defined to be the union of all the typesets of its finite members, denoted by $typ \{\mathcal{K} \}$.

A finite algebra or a class of algebras is said to omit a certain type if that type does not appear in its typeset. For locally finite varieties omitting certain types can be characterized by Maltsev conditions, i.e. by the existence of certain terms that satisfy certain linear identities, and there are quite a few results on this so far. We shall present two of them concerning omitting types 1 and 2.
\begin{definicija}
An n--ary term $t$, for $n > 1$, is a near--unanimity term for an algebra $\mathbf{A}$ if the identities $t(x,x,\dots ,x,y) \approx t(x,x,\dots ,y,x) \approx  \dots  \approx t(x,y,\dots ,x,x) \approx t(y,x,\dots ,x,x) \approx x $ hold in $\mathbf{A}$.
\end{definicija}
\begin{definicija}
An n--ary term $t$, for $n > 1$, is a weak near--unanimity term for an algebra $\mathbf{A}$ if it is idempotent and the identities $t(x,x,\dots ,x,y) \approx t(x,x,\dots ,y,x) \approx  \dots  \approx t(x,y,\dots ,x,x) \approx t(y,x,\dots ,x,x)$ hold in $\mathbf{A}$.
\end{definicija}
\begin{teorema}

A locally finite variety $\mathcal{V}$ omits the unary and affine types (i.e. types 1 and 2) if and only if there is some $N > 0$ such that for all $k > N$, $\mathcal{V}$ has a weak near--unanimity term of arity $k$.\label{th}

\end{teorema}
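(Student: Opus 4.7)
The plan is to prove both directions; the reverse implication (existence of weak near--unanimity terms of all sufficiently large arities implies omitting types 1 and 2) is the easier half and I would handle it first by contraposition. Suppose some finite algebra $\mathbf{A}\in\mathcal{V}$ has a minimal congruence $\alpha$ of type 1 or 2. Pick an $\alpha$--trace $N$ and consider the induced algebra $\mathbf{A}|_N$, which by Theorem 1.3 is polynomially equivalent either to a permutation algebra (type 1) or to a one--dimensional vector space over some prime field $\mathbb{F}_p$ (type 2). The strategy is to restrict a given $k$--ary WNU term $w$ to a polynomial of $\mathbf{A}|_N$ using the standard Hobby--McKenzie body/tail decomposition to ensure closure under $w$; idempotency of $w$ then forces this restriction to be an idempotent polynomial of $\mathbf{A}|_N$.

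In the type 2 case every idempotent polynomial of a vector space over $\mathbb{F}_p$ has the form $\sum a_i x_i$ with $\sum a_i=1$, and the WNU identities force $a_1=\cdots=a_k$, so that $k\cdot a=1$ in $\mathbb{F}_p$; this fails whenever $p\mid k$, so picking any $k>N$ divisible by $p$ yields a contradiction. In the type 1 case every idempotent polynomial of a permutation algebra is a projection, which cannot satisfy the WNU identities in a trace of size $\geq 2$. So both types are ruled out.

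The forward direction is the substantial half (Maróti--McKenzie). I would proceed in two stages. First, use the Hobby--McKenzie characterization to translate omission of types 1 and 2 into the existence of a Taylor term for $\mathcal{V}$, i.e.\ an idempotent term witnessing a nontrivial system of identities (no projection mimics it). Second, from a Taylor term of some fixed arity $n$, construct WNU terms of every sufficiently large arity $k$ by a symmetrization argument: iterate the Taylor term in carefully chosen positions of a larger term, then average (in the clone sense) over a suitable group action on the coordinates, typically via a cyclic group $\mathbb{Z}_k$, to equalize the effect of the distinguished variable at each position. Local finiteness is used to ensure that after finitely many steps the construction stabilizes to a uniform bound $N$ valid for all $k>N$.

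The main obstacle, and the reason this direction required substantial new ideas, is precisely this constructive step: showing that the symmetrization does not collapse to a projection. Omitting types 1 and 2 is essential here, since types 1 and 2 supply exactly the two obstructions (unary permutation cycles and affine linear dependence modulo a prime) that would obstruct the symmetrization. Controlling these obstructions uniformly in $k$ is the crux, and is handled by exploiting that any obstruction to a WNU of arity $k$ would, via standard congruence arguments, localize to a minimal algebra of type 1 or 2 in a finite quotient of the free algebra on two generators.
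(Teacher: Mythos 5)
The paper does not actually prove this theorem --- it quotes it from Mar\'oti and McKenzie \cite{hhm} --- so your proposal must be measured against that proof. Your backward direction is essentially correct and standard: composing a $k$-ary WNU term with an idempotent unary polynomial onto a minimal set, and using idempotence together with congruence-preservation to land inside a trace, yields a $k$-ary idempotent polynomial of the induced algebra on a type \textbf{1} or type \textbf{2} trace; on a permutation algebra this must be a projection, which violates the WNU identities on a set of size at least two, and on a one-dimensional vector space over a finite field of characteristic $p$ it has the form $\sum a_i x_i$ with $\sum a_i = 1$ and, by the WNU identities, all $a_i$ equal, which is impossible once $p \mid k$. (Minor slip: the field is finite but need not be prime; only its characteristic matters, so the argument survives.)

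The forward direction, however, contains a genuine gap, located exactly at the step you yourself call the crux. First, your stage-one reduction is wrong: a Taylor term characterizes omitting type \textbf{1} alone, not types \textbf{1} and \textbf{2}. Modules over $GF(p)$ have a Taylor (indeed Mal'cev) term yet admit no WNU term of any arity divisible by $p$, so no construction whose only input is a Taylor term can output WNU terms of all sufficiently large arities; the omission of type \textbf{2} must be used inside the construction, not merely declared ``essential.'' Second, ``averaging over a group action on the coordinates'' is not an operation available in a clone: there is no general symmetrization procedure for term operations, and producing one that succeeds precisely under the hypothesis that types \textbf{1} and \textbf{2} are omitted is the entire content of the Mar\'oti--McKenzie theorem. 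Their proof does not symmetrize a Taylor term; it works inside finite free algebras of the variety and constructs the weak near-unanimity operations by a delicate induction that exploits tame-congruence-theoretic consequences of omitting both types. As written, your sketch names the obstacle but does not overcome it, so the substantial half of the theorem remains unproved.
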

\begin{proof} The proof can be found in \cite{hhm}.
\end{proof}
\begin{teorema}
A locally finite variety $\mathcal{V}$ omits the unary and affine types if and only if it has 3--ary and 4--ary weak near--unanimity terms, $v$ and $w$ respectively, that satisfy the identity $v(y,x,x) \approx w(y,x,x,x)$.
\end{teorema}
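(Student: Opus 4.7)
The plan is to prove the two directions separately; the backward direction is a direct typeset calculation, while the forward direction is substantially harder.

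For the backward direction, assume terms $v$ and $w$ as in the statement exist and, for contradiction, that some finite algebra $\mathbf A\in\mathcal V$ has a congruence cover $(\alpha,\beta)$ of type $1$ or $2$. Factoring out $\alpha$ and passing to a $\beta$-neighbourhood $N$, Theorem 2.3 tells us the induced algebra on $N$ is polynomially equivalent either to a $G$-set (type $1$) or to a one-dimensional vector space over a finite field $\mathbb F_p$ (type $2$); the restrictions $v|_N$ and $w|_N$ are idempotent polynomials on $N$ inheriting the WNU identities and the linking identity. In the type $1$ case every idempotent polynomial on $N$ is a coordinate projection, and the WNU identities on $v$ force its three candidate projections to agree on $N$, which is impossible when $|N|>1$. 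In the type $2$ case $v|_N$ and $w|_N$ are idempotent affine combinations $\sum a_ix_i$ and $\sum b_jx_j$ with $\sum a_i=\sum b_j=1$; the WNU identities force all the $a_i$ to coincide and all the $b_j$ to coincide, so $a_i=\tfrac{1}{3}$ and $b_j=\tfrac{1}{4}$ in $\mathbb F_p$, which already requires $p$ coprime to $6$. The linking identity then reduces to $\tfrac{1}{3}(y-x)=\tfrac{1}{4}(y-x)$, i.e.\ $\tfrac{1}{12}(y-x)=0$, contradicting the existence of distinct elements in $N$ since $12$ is invertible in $\mathbb F_p$.

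For the forward direction, assume $\mathcal V$ omits types $1$ and $2$. Theorem \ref{th} supplies $k$-ary WNU terms $t_k$ of $\mathcal V$ for all $k$ exceeding some fixed $N_0$. Two tasks remain: first, descend from these high-arity WNUs to specific $3$-ary and $4$-ary WNUs $v_0$ and $w_0$; second, arrange the linking identity $v(y,x,x)\approx w(y,x,x,x)$. The first task is the main obstacle: naive substitutions of the form $t_k(x_{i_1},\dots,x_{i_k})$ tend to break either idempotency or the WNU symmetry, so a genuinely combinatorial construction combining several $t_k$ of different large arities -- in the spirit of the Mar\'oti--McKenzie argument behind Theorem \ref{th} -- is needed to produce idempotent WNU terms of every arity $\ge 3$. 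For the second task, given $3$-WNU $v_0$ and $4$-WNU $w_0$, the binary traces $p(x,y)=v_0(y,x,x)$ and $q(x,y)=w_0(y,x,x,x)$ both satisfy $p(x,x)=q(x,x)=x$; since the $2$-generated free $\mathcal V$-algebra is finite, a standard transformation-monoid argument (iterating $p$ and $q$ to a common idempotent power and absorbing the result into outer compositions with $v_0$ and $w_0$) lets one replace the pair $(v_0,w_0)$ by a new pair $(v,w)$ whose traces coincide, preserving idempotency and the WNU equations. Together these steps yield the required $v$ and $w$.
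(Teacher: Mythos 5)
First, note that the paper does not actually prove this theorem: it is quoted from the literature, with the proof attributed to \cite{hhhm} (Kozik, Krokhin, Valeriote and Willard). So your proposal has to stand on its own. Your backward direction is essentially the standard tame-congruence-theory argument and is sound in outline: one localizes to a $\beta$-trace $N$ of a type $1$ or $2$ cover, observes that idempotent operations induce projections (type $1$) or affine combinations with equal coefficients (type $2$), and derives $\tfrac{1}{3}(y-x)=\tfrac{1}{4}(y-x)$ from the linking identity. The one technical point you gloss over is that $v$ and $w$ need not preserve $N$; one must first compose with the idempotent unary polynomial $e$ onto the $\beta$-minimal set $U$ (the operations $e\circ v$, $e\circ w$ restricted to $U$ still satisfy all the identities and do preserve the trace, since $v$, $w$ are idempotent and hence preserve $\beta$-classes). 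That is standard and easily repaired.

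The forward direction, however, contains a genuine gap rather than an omitted routine step. The descent from the Mar\'oti--McKenzie family of WNU terms of all sufficiently large arities to WNU terms of arities exactly $3$ and $4$ is precisely the hard content of the theorem, and your text only asserts that ``a genuinely combinatorial construction \dots is needed'' without supplying one; that is a statement of the problem, not a proof. Moreover, the proposed mechanism for forcing the linking identity is not workable as described: if one iterates the binary trace, e.g.\ replaces $v_0$ by $v_1(x_1,x_2,x_3)=v_0(v_0(x_1,x_2,x_3),x_2,x_3)$ so that $v_1(y,x,x)=f^2(y)$ with $f(y)=v_0(y,x,x)$, the resulting term is in general no longer a weak near-unanimity term, because $v_1(x,y,x)=v_0(f(y),y,x)$ need not equal $v_1(y,x,x)=v_0(f(y),x,x)$. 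So ``iterating $p$ and $q$ to a common idempotent power and absorbing the result into outer compositions'' destroys exactly the symmetry you need to preserve, and no argument is given that it can be restored. Both halves of the forward direction therefore remain unproved; to complete them you would need the actual constructions of \cite{hhhm} (or an equivalent), not a transformation-monoid heuristic.
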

\begin{proof}
The proof can be found in \cite{hhhm}.
\end{proof}

Therefore, omitting types 1 and 2 for a locally finite variety ( or, equivalently, congruence  meet-semidistributivity for a locally finite variety ) can be described by linear identities on 3--ary and a 4--ary term, both idempotent. In the  paper \cite{jj} we examined whether the same can be done by two at most 3--ary idempotent terms. We came to this result:

\section{ A system possibly describing congruence meet--semidistributivity,\\ i.e.  omitting unary and affine types } \label{section two}

 \begin{tvrdjenje}
 \noindent If it is possible to describe congruence meet--semidistributivity (i.e. omitting types 1 and 2 ) in a locally finite variety by two ternary terms $p$ and $q$, it can only be done by this system:
\vspace{0.7 cm} 
 
 \begin{equation}
\left\{
\begin{array}{r}
p(x,x,y)\approx p(x,y,y)\\
p(x,y,x)\approx q(x,x,y) \approx q(x,y,x) \approx q(y,x,x)
\end{array}\right. \label{system2}
\end{equation}
\end{tvrdjenje}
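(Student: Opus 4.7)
The plan is a two-sided squeeze, carving the candidate system out between necessary and sufficient conditions on a system of two ternary identities. Since omitting types \textbf{1} and \textbf{2} is an idempotent Mal'cev property, I may assume $p$ and $q$ are idempotent, and then every linear identity among them reduces to one in the two variables $x$ and $y$. The combinatorial data then consist of the twelve nontrivial two-variable specializations of $p$ and $q$; a candidate identity system is equivalent to a partition of these twelve terms (together with the projections), and the task is to show that only one such partition, up to the obvious symmetries, can describe congruence meet--semidistributivity of a locally finite variety.

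On the necessity side, the system must be satisfied in every congruence meet--semidistributive locally finite variety, so in particular in small concrete witnesses obtained from Theorem 2.6 and from familiar examples such as distributive lattices, semilattices, and varieties generated by near--unanimity algebras of various arities. Checking the candidate system against these witnesses eliminates every partition that forces too many coincidences and is therefore not realisable. On the sufficiency side, the system must force the failure of minimal sets of types \textbf{1} and \textbf{2}. In a type \textbf{1} minimal set only projections are idempotent ternary, so the system must be inconsistent for each of the nine projection assignments of $(p,q)$. In a type \textbf{2} minimal set (a module over some $\mathbb{Z}_p$), $p$ and $q$ must be affine combinations with coefficient sum $1$, so the system reduces to a linear inconsistency problem that has to be solved over every prime field simultaneously.

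Combining the two collections of constraints, and quotienting by the obvious symmetries ($p \leftrightarrow q$, permutation of the three argument positions within each term, and the global swap $x \leftrightarrow y$), narrows the admissible systems to a short list. The main obstacle is the combinatorial size of the partition space together with the infinitude of primes appearing in the type \textbf{2} analysis: to manage both, I would run a computer search (as foreshadowed in the abstract, e.g.\ via Paradox or a hand--written C enumerator) over partitions and the first several primes, and then finish by a uniform algebraic argument confirming that no additional prime spoils the picture. The final step is to match the surviving partition against the displayed system and to check, by direct substitution, that its identities are mutually consistent and cannot be further enlarged without violating the necessity side.
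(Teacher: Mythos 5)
This proposition is not proved in the present paper at all: it is imported verbatim from the author's earlier work \cite{jj}, and the text here only records the statement and the (easy) fact that the system implies congruence meet--semidistributivity. So your proposal has to be judged against \cite{jj} rather than against anything in this manuscript. Your overall plan --- restrict to idempotent terms, enumerate the possible systems of linear identities on the two-variable specializations of $p$ and $q$, kill candidates from the necessity side by testing them in concrete congruence meet--semidistributive algebras, and from the sufficiency side by the projection test (type $\mathbf{1}$) and the module test (type $\mathbf{2}$) --- is indeed the right shape of argument and is in the spirit of what \cite{jj} does. But as written it is a research plan, not a proof, and it has concrete gaps.

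First, the reduction ``every linear identity among them reduces to one in the two variables $x$ and $y$'' is not automatic: a system may contain identities genuinely involving three variables, such as $p(x,y,z)\approx q(y,z,x)$, which are strictly stronger than all of their two-variable consequences and are not encoded by any partition of the twelve binary specializations. These must be enumerated and eliminated separately, and you give no argument for that. Second, your sufficiency criterion is stated too loosely to carry weight: the passage from ``the identities cannot hold when $p,q$ are projections'' and ``cannot hold when $p,q$ are affine combinations $ax+by+cz$ with $a+b+c=1$ over a module'' to ``every locally finite variety satisfying the system omits types $\mathbf{1}$ and $\mathbf{2}$'' is a nontrivial meta-theorem about idempotent linear Mal'cev conditions (terms do not simply restrict to polynomials of a trace; one needs the tame-congruence-theoretic machinery of minimal sets, bodies, and pseudo-Mal'cev operations). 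Without invoking or proving that theorem the sufficiency filter is unjustified. Third, and most importantly, the entire content of the proposition is the \emph{uniqueness} of the surviving system, and in your write-up that step is deferred to a computer search you have not performed and to an unspecified ``uniform algebraic argument'' covering all primes in the type $\mathbf{2}$ analysis. Until the enumeration is actually carried out and the all-primes argument supplied, nothing is proved.
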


\vspace{0.7 cm}

\noindent It is easy too see that this system implies the property mentioned (the proof can be found in \cite{jj}), but it  remained unresolved whether it actually describes the property. In attempt to find a counterexample , i.e. a finite algebra that generates a variety satisfying congruence meet--semidistributivity but not satisfying the system above, we endeavor  to examine small digraphs, i.e. corresponding algebras of polymorphisms.
\begin{definicija}
A digraph is a pair $G=(V,E)$, where $G$ is a finite set of vertices and $E\subseteq V \times V$ is a set of edges. 
\end{definicija}
\begin{definicija}
An n--ary polymorphism of a digraph $G=(V,E)$ is a mapping $f:V^n \rightarrow V$ which preserves edges, that is for any $(a_{1},b_{1}), (a_{2},b_{2}), \dots ,(a_{n},b_{n}) \in E  $ the pair $(f(a_{1},\dots a_{n}), f(b_{1},\dots b_{n})) $ is also in $E$.
\end{definicija}
\begin{definicija}
An algebra of polymorphisms for a given  digraph $G=(V,E)$ is an algebra with the universe $V$ whose basic operations are polymorphisms of this digraph. 
\end{definicija}

\noindent Further research eplained here leans on results obtained by  Libor Barto and David Stanovsky as presented in \cite{bs}. As said before, congruence meet--semidistibutivity of a locally finite variety is equivalent to omitting covers of types {\bf 1} or {\bf 2} in the congruence lattices of finite algebras in the variety (\cite{hm}) , and is also referred to as 'bounded width' due to the fact that 'localconsistency checking' algorithm can faithfully solve the Constraint Satisfaction Problem in this case. So we say that a finite algebra is of 'bounded width' if and only if it generates a congruence meet--semidistributive variety. 

\section{ Two--element and three--element digraphs } \label{section three}

 There are $10$ non--isomorphic digraphs on  two vertices and each of them has an 3--ary  near--unanimity polymorphism, also called 'nu3' or a majority polymorphism (\cite{bs}). A finite algebra having a majority term--operation generates a congruence meet--semidistributive variety, but also satisfies  system \ref{system2} ( this is proved in \cite{jj}, example two). Therefore algebras of polymorphisms of two--element digraphs are of no further  interest to us.
\vspace{0.7 cm} 

\noindent As for three--element digraphs, we need to explain the case a bit more thoroughly,  again referring to results in \cite{bs}. We shall first define a two--semilattice operation:
\begin{definicija}
A 2--semilattice (2--sml) operation is an idempotent operation satisfying $f(x,y)\approx f(y,x)$ and $f(f(x,y),x)\approx f(x,y)$.
\end{definicija}
\begin{fakt}A finite algebra having a 2--semilattice term--operation generates a congruence meet--semidistributive variety, but also satisfies  system \ref{system2}.
\end{fakt}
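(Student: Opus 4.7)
The plan is to establish both conclusions by exhibiting explicit terms built from the 2-semilattice operation $f$. For the satisfaction of system (\ref{system2}) a single ternary term will suffice for both $p$ and $q$; for the congruence meet-semidistributivity conclusion, the plan is to invoke Theorem \ref{th} by producing weak near-unanimity terms of every arity $k\ge 2$.

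For system (\ref{system2}), my candidate is $p(x,y,z):=q(x,y,z):=f(f(x,y),z)$. I expect each of the six expressions $p(x,x,y)$, $p(x,y,y)$, $p(x,y,x)$, $q(x,x,y)$, $q(x,y,x)$, $q(y,x,x)$ to collapse to $f(x,y)$. Idempotence handles $p(x,x,y)$ and $q(x,x,y)$; the 2-sml axiom $f(f(x,y),x)\approx f(x,y)$ handles $p(x,y,x)$ and $q(x,y,x)$; commutativity applied once handles $q(y,x,x)=f(f(y,x),x)=f(y,x)=f(x,y)$; and the derived identity $f(f(x,y),y)\approx f(x,y)$ (obtained by applying the 2-sml axiom with the roles of $x$ and $y$ swapped and then using commutativity) handles $p(x,y,y)$. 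Every equation in the system then reduces to the tautology $f(x,y)\approx f(x,y)$, and idempotence of $p$ and $q$ is immediate from $f(f(x,x),x)=x$.

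For the CM-SD conclusion, I plan to verify that for every $k\ge 2$ the right-associated iterate
\[
w_k(x_1,\dots,x_k)\;:=\;f(x_1,f(x_2,\dots,f(x_{k-1},x_k)\dots))
\]
is a weak near-unanimity term. Idempotence of $w_k$ is iterated $f(x,x)=x$. For the wnu identities I would fix a position $i$ for $y$ and work from the innermost sub-expression outward: every sub-expression whose variables are strictly deeper than position $i$ collapses to $x$ via idempotence, the sub-expression at position $i$ produces $f(x,y)$ (using commutativity if $y$ is in the inner slot), and every outer application of the form $f(x,\cdot)$ is absorbed by the derived identity $f(x,f(x,y))\approx f(x,y)$, itself an immediate consequence of commutativity combined with the 2-sml axiom. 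The value $w_k=f(x,y)$ is therefore independent of $i$, so the wnu identities $w_k(y,x,\dots,x)\approx\dots\approx w_k(x,\dots,x,y)$ all hold. Theorem \ref{th} then yields that the generated variety omits types 1 and 2, and hence is congruence meet-semidistributive.

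I do not foresee a genuine obstacle; the only routine care needed is handling the edge cases $i=1$ and $i=k$ in the wnu verification, and confirming the two derived identities $f(f(x,y),y)\approx f(x,y)$ and $f(x,f(x,y))\approx f(x,y)$ from the three 2-sml axioms, each of which is a two-line calculation. As an alternative to constructing the wnu terms directly, the CM-SD half of the fact is already recorded in \cite{bs}, so one could also simply cite that reference and keep only the explicit verification of system (\ref{system2}).
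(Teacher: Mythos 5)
Your proof is correct. The paper itself gives no argument for this Fact --- it simply cites \cite{jj} (example one) --- so you have supplied a self-contained verification where the paper defers to a reference. Both halves of your construction check out. For system \ref{system2}, taking $p=q=f(f(x,y),z)$ works: all six expressions $p(x,x,y)$, $p(x,y,y)$, $p(x,y,x)$, $q(x,x,y)$, $q(x,y,x)$, $q(y,x,x)$ collapse to $f(x,y)$, and the two auxiliary identities you invoke, $f(f(x,y),y)\approx f(x,y)$ (substitute into the 2--sml axiom and use commutativity) and $f(x,f(x,y))\approx f(x,y)$ (commutativity followed by the 2--sml axiom), are exactly the right lemmas. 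For meet--semidistributivity, the right-associated iterates $w_k$ are indeed idempotent weak near--unanimity terms of every arity $k\ge 2$: the inner block below the position of $y$ collapses to $x$, the block at that position yields $f(x,y)$, and each outer $f(x,\cdot)$ is absorbed, so the value is independent of where $y$ sits; Theorem \ref{th} then applies since the variety generated by a finite algebra is locally finite. Your alternative of citing \cite{bs} for the bounded-width half is also legitimate, but the explicit wnu construction is preferable here since it keeps the Fact independent of the digraph-specific results. The only cosmetic remark: arity $2$ is not needed (Theorem \ref{th} only asks for all $k>N$), so you may start the family at $k=3$ without loss.
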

\noindent  This is proved in \cite{jj}, example one.

\vspace{0.7 cm} 
 
\noindent Results given in \cite{bs}  include figures of posets comparing the strength of polymorphisms. From these we can conclude the following about three --element digraphs, i.e. their algebras of polymorphisms: if they have a  'bounded width' polymorphisam  they either have a  nu3 (majority)   or a 2--semilattice polymorphisam (or both).  Here  'bounded width polymorphism' stands for existence of both a 3--wnu and a 4--wnu polymorphism
 sharing the same binary operation as said in  definition 2.5 and theorem 2.7 above.  Therefore for all corresponding algebras of polymorphisms  holds the following: if they generate a congruence meet--semidistributive variety (i.e. if they have a  'bounded width polymorphism')  they  also have either a nu3 or a 2--sml polymorphisms (or both), thereby satisfying system \ref{system2}. This means they are of no further interest in our research.
 
 \section{ Four--element digraphs } \label{section four}
 
   If we take a look at the strength of polymorphisms of these digraphs    (figure $10$ in \cite{bs}) we can conclude the following: nu3 and 2--sml polymorphisms both imply 'bounded width' polymorphism (are stronger). Also the existence of a 2--sml polymorphism is equivalent to the existence of a weak near--unanimity polymorphism of arity 2, or wnu2 (in the sense that a digraph has the first one if and only if it has the second one). From the figure $11$ in the same paper it can be seen that there are $29$ digraphs on four vertices for which a 'bounded width' polymorphism is minimal and there are no digraphs of this size having wnu3 or wnu4 polymorphism as minimal. Since a 'bounded width polymorphism' is stronger than these two, we came to the following conclusion: if we exclude digraphs having a nu3 (majority) polymorphism and digraphs having a wnu2 polymorphism, remaining digraphs have a 'bounded width' polymorphism if and only if they have a wnu3 polymorphism (and there should be only $29$ of them satisfying this condition). We managed to isolate these $29$ digraphs and then tested them for existence of polymorphisms $p$ and $q$ as in the system (\ref{system2}). In the result we found out they all satisfy this system, so there is  no counterexample of size four.

\subsection{The procedure}
 
\noindent C--source code examining  four--element digraphs does the following (it is provided in Appendix one of this paper):
\begin{enumerate}
\item generates all digraphs of size 4
\item detects non--isomorphic ones 
\item sets the matrix of subalgebras for non--isomorphic digraphs 
\item then identifies ones having  a nu3 polymorphism (a majority polymorphism) and excludes them from further examination ( they are of bounded width but satisfy the system (\ref{system2}), so are of no interest here) 
\item among the remaining digraphs it identifies ones having a wnu2--polymorphism (i.e. a binary idempotent commutative operation) and excludes these too for the same reason as above 
\item the remaining digraphs are then tested on the existence of a wnu3 term, which gave us $29$ digraphs having a 'bounded width' polymorphism as a minimal one
\item we then test these digraphs for existence of polymorphisms from the system (\ref{system2})
\item in the final result all of these digraphs satisfy the system mentioned, so we have no counterexample among four--element digraphs
\end{enumerate}
\vspace{2 cm}
\noindent We shall provide here a bit more information on each item in the previous list (quite a detailed explanation is given with the source code in appendices, as well as a number of comments within this  code).

\vspace{0.5 cm}

\begin{enumerate}
\item We represented a 4--element digraph by a 16--element array (of character type), members being '1' for an edge  and '0' for no edge. The  presumed order of edges in this array would be  (0,0), (0,1), \dots ,(3,2), (3,3). Generating all digraphs of size four was done in the following way: we looked at each digraph as it was a binary number having exactly the same digits, so we could obtain the next digraph just by adding a binary $1$ to the current one. We started, of course, from the array containing $16$ zeroes (that is $16$ characters '0'). This way we generated all $65536$ digraphs with four vertices.
\item When detecting non--isomorphic digraphs we used the following procedure: we take the first digraph (from the array of all digraphs) having '0' on its isomorphism flag (not a copy), and then examine for isomorphism  all digraphs with greater indexes and the same number of edges. Each time we find an ismorphic copy, we set its flag to '1' (is a copy). When all the copies are identified we proceed through the  array  to the next digraph having '0' on this flag and do the same. The procedure used is basically a  'brute force' one (improved by the fact that we only examine digraphs with the same number of edges for isomorphism), but it works fast enough since the number of 4--element digraphs is not too big.  This way we detect all $3044$ non--isomorphic digraphs.  
\item A four element digraph can have maximum ten subalgebras, so we formed a ten--column matrix of character type to keep information on subalgebras for each digraph ( the number of rows is actually 65536, which is the number of all 4--element digraphs, but we only set values in rows corresponding to non--isomorphic digraphs).   The presumed order of subalgebras is  $\{0,1\}$, $\{0,2\}$, $\{0,3\}$, $\{1,2\}$ \dots,$\{1,2,3\}$, and the matrix may  contain only  '0' for 'not a subalgebra'   and '1' for 'is a subalgebra'.  Subalgebras are set by the function doing four checks,  which means we did not check all possible conditions for subalgebras, but this worked just fine. Our function  does the following checks for  each one of non--isomorphic digraphs and for each of the sets above (say, for example, $\{0,2\}$):
\begin{itemize}
\item  we check  whether there is a node in this digraph such that $0$ and $2$ are the only nodes with edges leading to this node -- if this is a case then $\{0,2\}$ is a subalgebra of the digraph given, so we set the corresponding element of the matrix to '1' and skip the following checks for the same subset;  
\item  if the first check did not give us the subalgebra result we do the next one --  whether there is a node in the digraph  having only two edges leading from it and exactly to $0$ and $2$ -- if so $\{0,2\}$ is a subalgebra so '1' is being  set on the proper matrix element, and the remaining two checks skipped; 
\item the third condition -- whether $0$ and $2$ are the only two nodes having paths of length one leading to them -- if so we have a subalgebra, \dots, we skip the last check;  
\item the last condition --  whether $0$ and $2$ are the only two nodes having paths of length one leading from them -- if so we have a subalgebra,  \dots, if not we are finished with the set $\{0,2\}$ and   the corresponding element of the matrix has not changed (there remains '0', not a subalgebra).
\end{itemize}

\item  Examining digraphs for a majority polymorphism was done by a pair of  recursive functions doing backtracking on a $24$--element array. This array is of structure type, each of its elements containing a three element string of arguments for a majority polymorphism (the order is "012", "013", \dots, "321") and a character type variable which should contain a value assigned to the polymorphism function in these arguments. Initially all the values are set to 'a', which is, of course, not an allowed value for a polymorphism.  The functions operating on this array does the following: the first function, named  $f_m$, sets the first value (that would be '0') on the element of the array being set, then calls the 'check' function to test whether the value assigned is compatible with the relation represented by the digraph  and also with previously assigned values of this polymorphism (that is array). If we have compatibility the value is allowed, so $f_m$ function would just move to the next element  of the array being set and call itself again. In the opposite case, that is no--compatibility, $f_m$ function attempts to assign the value '1' and then calls for a check, if it is allowed it goes to the next element of the array\dots. If none of the values '0', '1', '2', '3' can be assigned to the current element of the array $f_m$ function calls the other recursive function, named 'backwards', to reset previously defined elements of the array (that is all from the beginning to the current one, not including the current one) if possible. When these values are reset $f_m$ will start again on the current element, beginning with '0'. If it was impossible to reset the preceding values this function will finish, leaving 'a' as a value on the current element of the array and also on all the following elements   , which would subsequently be detected as 'no majority polymorphism'. So basically there are two recursive functions -- one going forward, if possible, the other going backwards, that is resetting once set values in the array when detected that we can not move forward any more. When these functions are finished we have one of two possible outcomes:  if all  $24$ values are set it means we have a mojority polymorphism on the digraph being examined, and if there is the  value 'a' on the last element of the array it means there is no majority polymorphism (it is sufficient to test just the last value in the array once the functions finish).  This worked just fine and fast enough in a sequential mode and we obtained the result -- there are $1690$ digraphs having this polymorphism.
\item Examining digraphs for a wnu2 polymorphism was done in the following way:  first we create a matrix containing all idempotent commutative binary operations on four elements. A binary idempotent commutative operation needs only be defined on six pairs of arguments, i.e. $(0,1), (0,2), (0,3),(1,2), (1,3), (2,3)$, so each operation was presented by a six--element row of this matrix containing its  values in these pairs of arguments. For each digraph we go through the matrix and check compatibility -- this is done by the function $f_ {pair}$. We shall give a short example of how checking compatibility works: say we are examining an operation (presented by a row of the matrix mentioned) having $0$ as a value in $(1,2)$, that is $f(1,2)=0$. For all nodes $x$ and $y$ of this digraph such that there is an edge from $x$ to $1$ and from $y$ to $2$ there should be an edge from $f(x,y)$ to $0$. The other way round should also hold, that is for  all nodes $x$ and $y$  such that there is an edge from $1$ to $x$ and from $2$ to $y$ there should be an edge from $0$ to $f(x,y)$.  
Once we find a compatible operation we proceed to the next digraph. 
\item The existence of wnu3 polymorphism, however, could not be examined in the way  we did it for a majority polymorphism -- namely backtracking explained above was in this case  to be done on a 36--element array because a wnu3 polymorphism needs to be defined on the same $24$ arguments like a majority polymorphism, that is "012", "013", \dots, "321", but also on these: "001",  "002",\dots, "330", "331", "332".  The method used for a majority term  did not work fast enough (sequential mode), because  the backtracking array was too long, thereby generating too many recursive calls of the functions mentioned. We had to shorten 'the backtracking part' of this array, that is to assign some values to some of the arguments  for this polymorphism and then attempt backtracking on the rest. The way we did this was the following: 
\begin{itemize}
\item we identified 2--element subalgebras of the digraphs (these were the remaining digraphs, that is the ones not having a majority nor a wnu2 polymorphism)
\item The digraphs can have anywhere in between $0$ and $6$ two--element subalgebras, so we divided them into categories  according to this number. We shall explain the further procedure on two of these categories -- for digraphs having all six two--element subalgebras, and for those having, say,  five of these.   
\item For digraphs having six two--element subalgebras we looked at these arguments for a wnu3 polymorphism: "001", "002", "003", "110", '112", "113',\dots ,"330", "331", "332". In each of these twelve arguments the polymorphism can only have one of the two possible values : 0 or 1 in "001", 0 or 2 in "002" etc. We then generated a matrix  containing all possible values for these twelve arguments ($12$ columns and $2^{12}$ rows). These twelve strings of arguments are in the beginning of the backtracking array, so we assigned row by row of this matrix to be the values of the wnu3 polymorphism for these specific arguments and attempted backtracking from the thirteenth member of the array to the end for each of this rows. This enhanced the procedure greatly, since the recursive functions doing backtracking  were executed on a 24--member array. These functions had to be altered a bit in comparison to functions searching for a majority polymorphism -- namely they only set the array from some point on, never changing the beginning of it, but when setting a particular value they check compatibility with all previous values including the ones from the matrix. 

\item In the case of five two--element subalgebras the things get a bit more complicated: there are six possible choices of five subalgebras and for each of these choices we have to create a new matrix of values and to alter the beginning of the backtracking array. For example if subalgebras are $\{0,1\}$, $\{0,2\}$, $\{0,3\}$, $\{1,2\}$, $\{1,3\}$ the matrix would have five columns and $2^{5}$ rows,   having only $0$ and  $1$ in the first column, $0$ and  $2$  in the second and so on. Moreover we have to put ten corresponding elements, that is "001", "002", "003", "110", "112", "113", "220", "221", "330", "331",  in the beginning of the backtracking array ( putting elements  "223" and "332" on positions  eleven and twelve). Then we do the same as in the previous case, that is we assign  a row of the matrix to be the values on first ten members and attempt backtracking from the  eleventh member on. 
We have to create a new matrix of values and  to permute some elements of the backtracking array for each choice of five subalgebras, which means this part of the code needs to be executed within six iterations. In the functions doing backtracking we only need to change a starting point from the index $13$ to the index $11$ in the backtracking array.  This is explained thoroughly in appendices of this paper. The backtracking part of the array is in this case  $26$ members long and it works fast enough.  
\item In all the remaining cases, that is four, three, two or one two--element subalgebra we did exactly the same as explained above. In each of these cases we had to create a new matrix of possible values for each choice of subalgebras, and also to alter the beginning of the backtracking array so that the corresponding elements would come first. The functions searching for a wnu3 term would also start  from different positions in this array. This means we had to iterate this part of the code quite a number of times to obtain the final results.   
\item The procedure described worked very well-- luckily enough all the digraphs having a wnu3 polymorphism were among ones  actually having two--element subalgebras, so we managed to find all $29$ of them. What strikes as interesting is  that  assigning values to just two elements of the 36--element array and doing backtracking on the rest of it, on the length  $34$ that is, was fast enough (this is the case with a single 2--element subalgebra). As mentioned before,  backtracking on the length $36$ was impossible (that is did not finish even after several hours for particular digraphs).
\item This way we isolated  $29$ digraphs not having a majority nor a wnu2 polymorphism, but having a wnu3 polymorphism. As already said these are the ones having a 'bounded width' polymorphism as a minimal one, therefore were candidates for a counterexample.

\end{itemize}
\item In this  step we tested these digraphs for existence of polymorphisms $p$ and $q$ from system \ref{system2}. It was done in the following way:
\begin{itemize}
\item first we find a wnu3 term for the digraph being examined; this is to be the $q$ term from the system \ref{system2}.
\item then we assign $12$ values of this term  to the elements at the beginning of the backtracking array for the $p$ term (these would be the values of the wnu3 term for the following arguments: "010", "020", "030", "101", "121", "131", "202", "212", "232", "303", "313" and "323". In these arguments $p$ and $q$ have the same values according to the system \ref{system2}). 
\item next we do backtracking on the remaining of the array (that is we try to assign remaining $36$ values for $p$). The functions doing backtracking and all the checks needed are exactly the same as described before (in the cases of majority term and wnu3 term), so we shall not comment them again. They can be found within the source code in appendices of this paper.
\item in the result all $29$ digraphs were found to satisfy the system \ref{system2}, that is to have terms $p$ and $q$.

\end{itemize}

Interestingly enough, this backtracking on $36$ elements when searching for the term $p$ worked just fine, as opposed to the attempt on the same length when examining digraphs for a wnu3 polymorphism. Also, for each of these $29$ digraphs the term $p$ was found for the very first wnu3 term being examined (when we take the least wnu3 as the term $q$  for a particular digraph, the least by its values that is, we can find the term $p$ paired with this one so as to satisfy the system \ref{system2}).
\item as already said we established there was no counterexample among $4$--element digraphs.    
 
\end {enumerate}
\subsection{Complexity}

\noindent Now we shall estimate time complexity for each of the steps (i.e sections of code) mentioned above. 
\begin{enumerate}
\item The time needed for generating all digraphs of size four (or of any given size) is a linear function of the number of digraphs of given size. If we denote a number of  digraphs with $N$ generating would take $C*N$ time units, $C$ being some constant value. However, the number of digraphs is actually $N = 2^{2^{n}}$, $n$ being a number of vertices, so this step takes $C*2^{2^{n}}$ time units.
\item When searching for non--isomorphic digraphs we used a method close to the 'brute force' one as already said, and by 'brute force' we mean comparing a digraph for an isomorphism with all digraphs that follow this one in the array containing all digraphs. We improved this a bit by comparing only digraphs having the same number of edges, but basically the complexity of this method remained the same -- if $N$ is a number of all digraphs of given size this step takes $C*N^{2}$ time units, $C$ being some constant value. We can also express this as a function of number of vertices, so it would be $C*(2^{2^{n}})^{2}$    . This worked fine for 4--element digraphs, but for 5--element ones we used a better (and more clever) method as explained in the section \ref{section five}.
\item The function checking subalgebras takes constant amount of time for each digraph, which gives us $C*N$ time units, where $C$ is some constant value and $N$ is the number of non--isomorphic digraphs.
\item Searching for a majority polymorphism was done by a pair of recursive functions, as already explained above. The first one,  called $f_m$ in the original code,  sets the value of the current element of the backtracking array (that is a value for this polymorphism on a given $3-tuple$ of arguments), if possible, and then moves  forward in this array. The second one, called 'backwards', resets  all previously set values in this array to new ones when setting the current value is not possible. Let us look at the time complexity here, analyzing only the worst possible case of course: say we are setting a value in the element of the array having index $i$. If it is impossible to set it considering previously set values the function 'backwards'  can be called by the function $f_m$ as many as $4^{i}$ times before $f_m$ proceeds further (because this is the number of ways to set values on the preceding $i$ members of this array). Each call of the function 'backwards' can generate $4^{i-1}$ new calls of this function, and each of these calls can give us $4^{i-2}$ new calls... So, basically, when setting the element of the array with the index $i$ there can occur as many as $4^{i}*4^{i-1}*4^{i-2}*\dots *4 = 4^{i*(i+1)/2}$ calls of the function 'backwards'. So, if we suppose this is what happens when setting a value on each element of the array, we have $\Sigma_{i=1}^{i=M} 4^{i*(i+1)/2}$ calls of this function, $M$ being the length of the backtracking array, that is the number of values to be set for a certain polymorphism. We can approximate the previous sum, from above, to $\mathbf{C*4^{M*(M+1)/2}}$, $C$ being some constant value (since $\Sigma_{i=0}^{i=\infty} 4^{i} = 4/3 $). So the time complexity of this procedure would be $\mathbf{C*4^{M*(M+1)/2}}$ time units ($C$ a constant value, $M$ the number of values to be set for a certain polymorphism) , which is extremely high. There are a number of checks and loops accompanying this recursive calls, but they do not contribute significantly to the complexity obtained so we did not mention them for the sake of simplicity. The  backtracking described  worked for $M  = 24$, a majority  polymorphism needs $24$ values defined, but could not work for $M = 36$ when searching for a wnu3 polymorphism.  The breaking point was the length  $35$, or maybe even $36$ since it worked on the length  $34$, as we have mentioned above. 

\noindent At this point we could alter this complexity expression given above as to make it a function of the number of vertices -- if a digraph had $n$ vertices we would have  to define $n*(n-1)*(n-2)$ values for a majority polymorphism, so the backtracking array would be that long, and this gives us the following complexity: $\mathbf{C*n^{n*(n-1)*(n-2)*(n*(n-1)*(n-2)+1)/2}}$. 
\item When examining digraphs for a wnu2 polymorphism first we created a matrix of all idempotent commutative binary operations   . Its dimensions are $n^{n*(n-1)/2} \times n*(n-1)/2$, where $n$ is the number of vertices (in this case $4$). For each  digraph we went through this matrix checking compatibility, the check requiring a constant amount of time. Therefore time complexity of this section of code would be $C*n^{n*(n-1)/2}*N$, where $C$ is some constant value, $n$ is the number of vertices and $N$ is the number of digraphs being examined (non--isomorphic digraphs not having a majority polymorphism).
\item The time complexity of the section of the code searching for a wnu3 polymorphism is the same as when searching for a majority polymorphism for we used the same functions to do the backtracking needed, $\mathbf{C*4^{M*(M+1)/2}}$, where $C$ is some constant value and $M$ is the number of values to be set for a certain polymorphism (wnu3). We did shorten the backtracking part of the array from the length $ M = 36$ (impossible) to the length at most $34$ (possible) in the way described above, but generating the matrices needed and assigning values to the members at the beginning of the backtracking array do not effect overall complexity of this method -- it is very high due to the number of recursive calls. Iterations mentioned above and needed to obtain results  do not effect complexity either, they can just alter this constant value $C$ in the expression.
\item The time complexity of the section of the code searching for polymorphism $p$ and $q$  is even greater then when searching for a majority polymorphism or a wnu3 polymorphism since the idea is to find a wnu3 polymorphism first, then to search for a $p$ term for this particular  wnu3, and in case  it doesn't exist we search for another  wnu3 polymorphism and then for a corresponding $p$ term  and so on. This basically means that the complexity would be (the upper bound) $\mathbf{C*4^{M*(M+1)/2}*4^{M_{1}*(M_{1}+1)/2}}$, where $C$ is some constant value and $M$, $M_{1}$ are  the numbers of values to be set for these polymorphisms (wnu3 and $p$ respectively). The fact that in each case the term $p$ was found for the very first wnu3 term being examined made things much easier for us, of course, but it does not effect the complexity -- in the worst case it could be as given above.        
\end{enumerate}

 \section{ Five--element digraphs } \label{section five}
 
 For five-- element digraphs holds exactly the same as for four--element ones regarding the strength of polymorphisms (figure $10$ in \cite{bs}): if we exclude ones having a majority polymorphism and a wnu2 polymorphism  the remaining digraphs are of 'bouded width' if and only if they have a wnu3 polymorphism. There should be $3475$ of these digraphs, and exactly they would be the candidates for a counterexample.It is here that we encountered a problem -- namely we obtained results on a wnu2 polymorphism, but the source codes doing backtracking for 5--element digraphs were not fast enough, though ran in parallel mode, so we had to use Paradox model builder to examine these digraphs for majority, wnu3 and $p$ and $q$ polymorphisms.

What we did by C--source code was generating all 5--element digraphs, finding all non--isomorphic ones (this was done in sequential mode) and then examining these for an idempotent binary commutative operation (a wnu2 polymorphism, done in parallel mode).  The methods used are very similar to these when processing four--element digraphs, namely we set a matrix containing  subalgebras for each digraph, and yet another one containing all idempotent commutative binary operations. In this case each operation was represented by a 10--element row of this matrix. For each digraph we went through this matrix checking compatibility. Because of both the number of non--isomorphic digraphs and the number of binary operations we executed this code on $13$ processors in a master--slave hierarchy. Each one of the slaves was given its own copy of both matrices mentioned, while the master distributed digraphs one by one to each slave.

Both codes are presented and explained in details in Appendix two.

\vspace{0.7 cm}

\emph{The results we obtained on 5--element digraphs  slightly differed from the ones given in \cite{bs}  in the sense that we got $132509$ non--isomorphic digraphs having a wnu2 polymorphism, which is less by one than the number given in \cite{bs}. When examining the file the authors provided as a result of testing all  non--isomorphic 5--element digraphs for 18 different polymorphisms  we found this was no more than a counting mistake-- namely there were some redundant data at the end of this file. This however caused  all the entries in the figure $7$ in \cite{bs} to be greater by one then then actually are.}

\emph{The erratum is now on the website presenting   results from \cite{bs}.} 

\subsection{Complexity}

\noindent As in the previous section we shall estimate complexity for each part of the procedure:
\begin{itemize}
\item The time needed to generate all digraphs of size five is a linear function of the number of digraphs, so $C*N$, $N$ being the number of digraphs, or $C*2^{2^n}$, $n$ being the number of vertices (here five). 
\item Detecting non--isomorphic digraphs in the way explained above takes $C*N$ time units, $N$ being the number of all digraphs, because generating isomorphic copies for each digraph and then calculating their positions and setting corresponding flags to '1' takes a constant amount of time. This way we reduced the complexity for an order of magnitude, namely from $C*N^2$ (brute force) to $C*N$ ('the sieve of Eratosthen' method).
\item As in the case of 4--element digraphs, setting the matrix of subalgebras takes $C*N$ time units, $N$ being the number of non--isomorphic digraphs with five vertices. 
\item Setting the matrix of all idempotent commutative binary operations and examining non--isomorphic digraphs for this kind of operation (a wnu2 polymorphism) takes $C*n^{n*(n-1)/2}*N$ time units, where $n$ is the number of vertices, here five, and $N$ is the number of non--isomorphic digraphs of this size. The complexity expression is already explained in the 4--element case. Let us only notice that executing this section of code in parallel mode does not change its complexity but only the constant factor $C$, that is the actual amount of time needed.   
\end{itemize}

\subsection{Paradox}

\noindent As mentioned above we used Paradox model builder to examine remaining digraphs for majority, wnu3 and $p$ and $q$ polymorphisms (as denoted in the system \ref{system2}). Paradox was also executed in parallel mode under Linux shell. All input files were generated by C-programming language as text files (an input file depends on a digraph and a polymorphism). We shall present here an example of input file (this was used when searching for $p$ and $q$, $q$ being denoted by $g$):
\begin{verbatim}
cnf(mt, axiom, p(X,X,X)=X).
cnf(mt, axiom, p(X,X,Y)=p(X,Y,Y)).
cnf(pr, axiom, ~gr(X0,X1) | ~gr(X2,X3) |  ~gr(X4,X5) | gr(p(X0,X2,X4),p(X1,X3,X5))).
cnf(wnu, axiom, g(X,X,X)=X).
cnf(wnu, axiom, g(X,X,Y)=g(X,Y,X)).
cnf(wnu, axiom, g(X,X,Y)=g(Y,X,X)).
cnf(pr, axiom, ~gr(X0,X1) | ~gr(X2,X3) |  ~gr(X4,X5) | gr(g(X0,X2,X4),g(X1,X3,X5))).
cnf(mt, axiom, p(X,Y,X)=g(Y,X,X)).
cnf(graph, axiom, ~gr(n0,n0)).
cnf(graph, axiom, ~gr(n0,n1)).
cnf(graph, axiom, ~gr(n0,n2)).
cnf(graph, axiom, gr(n0,n3)).
cnf(graph, axiom, gr(n0,n4)).
cnf(graph, axiom, ~gr(n1,n0)).
cnf(graph, axiom, ~gr(n1,n1)).
cnf(graph, axiom, ~gr(n1,n2)).
cnf(graph, axiom, ~gr(n1,n3)).
cnf(graph, axiom, ~gr(n1,n4)).
cnf(graph, axiom, ~gr(n2,n0)).
cnf(graph, axiom, ~gr(n2,n1)).
cnf(graph, axiom, ~gr(n2,n2)).
cnf(graph, axiom, ~gr(n2,n3)).
cnf(graph, axiom, gr(n2,n4)).
cnf(graph, axiom, gr(n3,n0)).
cnf(graph, axiom, ~gr(n3,n1)).
cnf(graph, axiom, ~gr(n3,n2)).
cnf(graph, axiom, ~gr(n3,n3)).
cnf(graph, axiom, gr(n3,n4)).
cnf(graph, axiom, ~gr(n4,n0)).
cnf(graph, axiom, gr(n4,n1)).
cnf(graph, axiom, ~gr(n4,n2)).
cnf(graph, axiom, ~gr(n4,n3)).
cnf(graph, axiom, ~gr(n4,n4)).
cnf(elems, axiom, n0!=n1).
cnf(elems, axiom, n0!=n2).
cnf(elems, axiom, n0!=n3).
cnf(elems, axiom, n0!=n4).
cnf(elems, axiom, n1!=n2).
cnf(elems, axiom, n1!=n3).
cnf(elems, axiom, n1!=n4).
cnf(elems, axiom, n2!=n3).
cnf(elems, axiom, n2!=n4).
cnf(elems, axiom, n3!=n4).
cnf(elems, axiom, (X=n0 | X=n1 | X=n2 | X=n3 | X=n4)).
\end{verbatim}

\noindent  We managed to isolate $3475$ $5$--element digraphs having a wnu3 polymorphism as a minimal one, and they proved to have $p$ and $q$ terms, that is to satisfy the system \ref{system2}. This means we established there was no counterexample among $5$--element digraphs, i.e. among corresponding algebras of polymorphisms.

\section{the conjecture}

\noindent Since we found no counterexample for the system \ref{system2} up to size $5$, at this point we can state the conjecture:  
\vspace{0.7 cm}

\noindent \emph{A locally finite variety is congruence meet--semidistributive if and only if it satisfies the system  \ref{system2}}:  
 \begin{equation*}
\left\{
\begin{array}{r}
p(x,x,y)\approx p(x,y,y)\\
p(x,y,x)\approx q(x,x,y) \approx q(x,y,x) \approx q(y,x,x)
\end{array}\right. 
\end{equation*}

\vspace{2 cm}

\section{Appendix one -- Source code for 4--element digraphs, sequential mode}

What does it do:
\begin{itemize}
\item first we generate all 4-element digraphs
\item then we find all the non--isomorphic ones
\item in the next step we set the matrix of subalgebras for all  non--isomorphic digraphs -- knowing subalgebras of a digraph helps a great deal when searching for polymorphisms    
\item we identify digraphs having a majority term (we made a function doing backtracking on a 24--element array) and then  exclude these digraphs  from further examination
\item among the digraphs left we identify those having an idempotent   binary commutative operation (a wnu2 term) and exclude them too -- this is done by generating the matrix containing all binary idempotent commutative operations in which an operation is presented by a 6--element row, and then just  checking compatibility through the matrix for each digraph
\item the rest of digraphs are then tested for a wnu3 term, and the ones having it (exactly $29$ digraphs) are printed on the screen (this however could not be done simply bu doing backtracking  on a 36--element array as expected, but we had to shorten the backtracking part of the array by repeatedly assigning allowed values to several of its members at the beginning, according to the subalgebras of the digraph in question, and then attempting  the backtracking from that point on. This method works just fine -- it appears that fixing just four values of this array and doing backtracking on 32 members left creates no problem of whatsoever) 
\item  in the last step we tested the $29$ digraphs having a wnu3 term for terms $p$ and $q$ as explained above. They all appeared to have these terms, so in the result we established there was no counterexample of size four.   
\end{itemize}
\vspace{1 cm}

\noindent This code can be executed exactly as given, although quite a few changes of parameters, that is iterations, are needed to obtain all the results. This is, however, explained by  code--comments all along.

\vspace{2 cm}

\noindent \bf{The source code is the following:}

\vspace{2 cm}


		
\newpage

\section{Appendix two -- Source codes for 5--element digraphs, both sequential and parallel mode}

\noindent There are two separate source codes presented here. The first one is in sequential mode and it creates an output text file. The second one runs in parallel mode (it was originally executed on $13$ processors in master--slave hierarchy) reading this output file as an input and creating yet another output file. Both can be executed exactly as given here.

\noindent The first code does the following:

\begin{itemize}
\item first we generate all 5--element digraphs
\item then we find all the non--isomorphic ones (since there are so many digraphs, the method used for this resembles the method for identifying all prime numbers known as 'the sieve of Eratosthen' -- namely  isomorphism flags for all digraphs are previously set to '0', so we take the first digraph , generate all its isomorphic copies and raise their  flags to '1' in the array containing all digraphs.  Then we take the next digraph  having '0' on this flag, generate its copies  and set their flags to '1', and so on...When raising the flags of copies to '1' we didn't search for them, the copies that is,  through the array of all digraphs but rather calculate their positions in it -- after  generating each copy of a digraph we turned  it from string format consisting of '0' and '1'  into a binary number with the same digits, and then into a corresponding decimal number which is exactly the index of the copy in this array.
\item in the end we write all the non--isomorphic digraphs into an output file 
\end{itemize} 

\vspace{ 1 cm}

\noindent This way we come to 291968 non--isomorphic 5--element digraphs.
\vspace{ 1 cm}

\noindent The second code is to be executed in parallel mode and it does the following:

\begin{itemize}
\item we read the output file made by the first code and store these digraphs into an array
\item we set the matrix containing all binary idempotent commutative operations on five nodes  (because of idempotence and commutativity presumed each operation is represented by a 10--element row 
\item we set the matrix of subalgebras for all this digraphs which would help us when checking the compatibility of binary operations with digraphs  
\item  we test digraphs for a wnu2--term which now means checking compatibility  of a digraph given with the operations from this matrix -- once a compatible operation is found  we proceed to the next digraph 
\item digraphs not having a compatible binary commutative operation (wnu2 term) are in the end written into a new output file 
\end{itemize}

\vspace{1 cm}

\noindent This way we come to $132509$ non--isomorphic digraphs having a wnu2 polymorphism.

\vspace{1 cm}

\noindent \bf{The source  code identifying non--isomorphic digraphs is the following:}

\vspace{1 cm}

\begin{verbatim}

// five_vertices_one.cpp : Defines the entry point for the console application.
//

#include "stdafx.h"
#include <iostream>
#include <fstream>
using namespace std;
#include "math.h"
#define NUMBER 33554432 /* the number of 5-element digraphs*/

#define NUMBER1 291968  
FILE *fp; 


/* structures*/

char subalgebras[NUMBER][26];  
// to each digraph we associate a row in this matrix for its subalgebras

struct graph1 { char description[26] ; char isomorph; char majority; 
char wnu2; char wnu3;};
struct graph1 array[NUMBER];
// this is the array for all digraphs

struct graph1 array1[NUMBER1];

// this is the array for non-isomorphic ones


//prototypes of functions used

void add_one(char*, char*);
int num_edges(char*); 
void set_isomorph(char*, long int ); 
void f_subalgebras(void); 
int power( int, int); 



void main()
{
	int  n_1, n_2, iso, j, num_wnu3,k; long int i, num_non_iso = 0; ; 

	  long int auxiliary, aux2, br, num2, num3; 

/*initializing the array of digraphs*/

	 
	for(i=0; i<NUMBER; i++){
		array[i].isomorph='0';
		array[i].majority='0';
		array[i].wnu2='0';
		array[i].wnu3= '0';
		for (j=0; j<25; j++)  (array[i].description)[j] ='0';
		(array[i].description)[25] ='\0';}	

	/*setting relations, that is generating all 5-element digraphs*/

    for(i=1; i<NUMBER; i++)
		add_one(array[i-1].description, array[i].description); 


// by this point we have all 5-element digraphs, the presumed order of edges is:
// <0,0>, <0,1>, <0,2>, <0,3>, <0,4>, ...<4,4>
	   
	
// we search for non-isomorphic digraphs now, 
// copies will have their isomorphism flags raised to '1' 
	
	for( i=1; i<NUMBER-1; i++) 
		if (array[i].isomorph == '0')
		set_isomorph(array[i].description, i); 
		
// now we count non_isomorphic digraphs

	  num_non_iso=0; 
	  for( i=0; i<NUMBER; i++) 
		if (array[i].isomorph == '0')  num_non_iso++; 
		
printf (" \n\n non_isomorphic  : %d\n", num_non_iso);


 /*initializing the array for non_isomorphic digraphs*/

	 
	for(i=0; i<NUMBER1; i++){
		array1[i].isomorph='0';
		array1[i].majority='0';
		array1[i].wnu2='0';
		array1[i].wnu3= '0';
		for (j=0; j<25; j++)  (array1[i].description)[j] ='0';
		(array1[i].description)[25] ='\0';}	

// now writing these into a file

	fp=fopen("C:\graphs\graphs.txt", "w"); 
	
	for(i=0; i<NUMBER1; i++)
		fprintf(fp, "%s\n", array1[i].description); 
	fclose(fp);

	printf("\n\n done"); 


	getchar();
	getchar(); 


} /*main*/



// this function generates the next digraph by adding binary one 
// to the string with the address p and  
// setting this new string on the address q


void add_one( char *p, char *q){

	char *p1=p, *q1 =q; 


	while(*p1){
		*q1 = *p1;
		p1++; q1++;
	}
	 *q1 ='\0';
	 q1--;
	 while(*q1 =='1') q1 --;
	 *q1='1';
	 q1++;
	while( *q1 =='1') { *q1 ='0'; q1++;}
} /*add_one*/


// this function counts edges in the digraph on the address p


int num_edges( char *p) 
{
	   int i=0; 
	   while(*p){ if( *p =='1') i++; p++;} 

	   return i;} /* num_edges*/


// this function sets isomorphism flags to '1'
// for all the copies of the digraph given (the one with the address p)

void set_isomorph( char *p,  long int k)  /* k is the index of this digraph*/
{
	   char aux[26]; int   i,j, p0, p1, p2, p3, p4;  
	   long int index;
	  
	  char *q1, *r1; int flag; 

	   char *perm[119];    
	 
// this array of pointers contains all permutations of nodes '0','1','2','3','4'
// except for the identity one 

	   perm[0] = "01243";
	   perm[1] = "01324";
	   perm[2] = "01342";
	   perm[3] = "01423";
	   perm[4] = "01432";
	   perm[5] = "02134";
	   perm[6] = "02143";
	   perm[7] = "02314";
	   perm[8] = "02341";
	   perm[9] = "02413";
	   perm[10] = "02431";
	   perm[11] = "03124";
	   perm[12] = "03142";
	   perm[13] = "03214";
	   perm[14] = "03241";
	   perm[15] = "03412";
	   perm[16] = "03421";
	   perm[17] = "04123";
	   perm[18] = "04132";
	   perm[19] = "04213";
	   perm[20] = "04231";
       perm[21] = "04312";
	   perm[22] = "04321";
	   perm[23] = "10234";
	   perm[24] = "10243";
	   perm[25] = "10324";
	   perm[26] = "10342";
	   perm[27] = "10423";
	   perm[28] = "10432";
	   perm[29] = "12034";
	   perm[30] = "12043";
	   perm[31] = "12304";
	   perm[32] = "12340";
	   perm[33] = "12403";
	   perm[34] = "12430";
	   perm[35] = "13024";
	   perm[36] = "13042";
	   perm[37] = "13204";
	   perm[38] = "13240";
	   perm[39] = "13402";
	   perm[40] = "13420";
	   perm[41] = "14023";
	   perm[42] = "14032";
	   perm[43] = "14203";
	   perm[44] = "14230";
	   perm[45] = "14302";
	   perm[46] = "14320";
	   perm[47] = "20134";
	   perm[48] = "20143";
	   perm[49] = "20314";
	   perm[50] = "20341";
	   perm[51] = "20413";
	   perm[52] = "20431";
	   perm[53] = "21034";
	   perm[54] = "21043";
	   perm[55] = "21304";
	   perm[56] = "21340";
	   perm[57] = "21403";
	   perm[58] = "21430";
	   perm[59] = "23014";
	   perm[60] = "23041";
	   perm[61] = "23104";
	   perm[62] = "23140";
	   perm[63] = "23401";
	   perm[64] = "23410";
	   perm[65] = "24013";
	   perm[66] = "24031";
	   perm[67] = "24103";
	   perm[68] = "24130";
	   perm[69] = "24301";
	   perm[70] = "24310";
	   perm[71] = "30124";
	   perm[72] = "30142";
	   perm[73] = "30214";
	   perm[74] = "30241";
	   perm[75] = "30412";
	   perm[76] = "30421";
	   perm[77] = "31024";
	   perm[78] = "31042";
	   perm[79] = "31204";
	   perm[80] = "31240";
	   perm[81] = "31402";
	   perm[82] = "31420";
	   perm[83] = "32014";
	   perm[84] = "32041";
	   perm[85] = "32104";
	   perm[86] = "32140";
	   perm[87] = "32401";
	   perm[88] = "32410";
	   perm[89] = "34012";
	   perm[90] = "34021";
	   perm[91] = "34102";
	   perm[92] = "34120";
	   perm[93] = "34201";
	   perm[94] = "34210";
	   perm[95] = "40123";
	   perm[96] = "40132";
	   perm[97] = "40213";
	   perm[98] = "40231";
	   perm[99] = "40312";
	   perm[100] = "40321";
	   perm[101] = "41023";
	   perm[102] = "41032";
	   perm[103] = "41203";
	   perm[104] = "41230";
	   perm[105] = "41302";
	   perm[106] = "41320";
	   perm[107] = "42013";
	   perm[108] = "42031";
	   perm[109] = "42103";
	   perm[110] = "42130";
	   perm[111] = "42301";
	   perm[112] = "42310";
	   perm[113] = "43012";
	   perm[114] = "43021";
	   perm[115] = "43102";
	   perm[116] = "43120";
	   perm[117] = "43201";
	   perm[118] = "43210";

	  
	   
	   for( i=0; i<119; i++) { 
		   for(j=0; j < 25; j++) aux[j] ='0'; aux[25] = '\0'; 

// the i-th permutataion of the array with the address p  is now copied in the array aux,
// and then we calculate the index of this copy in the array array1

        p0 = perm[i][0] - '0';  
		p1 = perm[i][1] - '0';
		p2 = perm[i][2] - '0';
		p3 = perm[i][3] - '0';
		p4 = perm[i][4] - '0';




		if( p[0] =='1') aux[p0*5 + p0] ='1'; 
		if( p[1] =='1') aux[p0*5 + p1 ] ='1'; 
		if( p[2] =='1') aux[p0*5 + p2 ] ='1'; 
		if( p[3] =='1') aux[p0*5 + p3] ='1';
		if( p[4] =='1') aux[p0*5 + p4] ='1'; 
		if( p[5] =='1') aux[p1*5 + p0 ] ='1'; 
		if( p[6] =='1') aux[p1*5 + p1 ] ='1'; 
		if( p[7] =='1') aux[p1*5 + p2 ] ='1'; 
		if( p[8] =='1') aux[p1*5 + p3 ] ='1';
		if( p[9] =='1') aux[p1*5 + p4 ] ='1'; 
		if( p[10] =='1') aux[p2*5 + p0 ] ='1'; 
		if( p[11] =='1') aux[p2*5 + p1 ] ='1';
		if( p[12] =='1') aux[p2*5 + p2 ] ='1'; 
		if( p[13] =='1') aux[p2*5 + p3 ] ='1'; 
		if( p[14] =='1') aux[p2*5 + p4 ] ='1'; 
		if( p[15] =='1') aux[p3*5 + p0 ] ='1';
		if( p[16] =='1') aux[p3*5 + p1 ] ='1';
		if( p[17] =='1') aux[p3*5 + p2 ] ='1';
		if( p[18] =='1') aux[p3*5 + p3 ] ='1';
		if( p[19] =='1') aux[p3*5 + p4 ] ='1';
		if( p[20] =='1') aux[p4*5 + p0 ] ='1';
		if( p[21] =='1') aux[p4*5 + p1 ] ='1';
		if( p[22] =='1') aux[p4*5 + p2 ] ='1';
		if( p[23] =='1') aux[p4*5 + p3 ] ='1';
		if( p[24] =='1') aux[p4*5 + p4 ] ='1';

	

		
		index=0; 

		for(j=0; j<25; j++) if(aux[j]=='1') index+= power(2,24-j);

		 if(index > k) array[index].isomorph ='1'; 

       } /* for i*/ 

	  } /*set_isomorph*/

	

int power( int base, int exp){

	long int res =1; int k; 

	if ( exp == 0) return 1; 
	for ( k=1; k<=exp; k++) res=res*base; 

	return res; }



\end{verbatim}

\vspace{2 cm}

\noindent \bf{The source code examining digraphs for a wnu2 term is the following:}

\vspace{2 cm}



\noindent $\mathbf{Acknowledgements:}$

\noindent  I thank to my phd. adviser Petar Markovi\'c for his suggestions during this research.

\end{document}